\documentclass[12pt,tbtags,leqno]{amsart}



\usepackage{amssymb}
\usepackage{amsthm}
\usepackage{amsmath}
\usepackage{verbatim}
\usepackage{cite}

\numberwithin{equation}{section}

\newcommand{\HH}{\mathcal{H}}

\newcommand{\HB}{\mathcal{B}}

\newcommand{\Hprod}{\mathcal H \odot \mathcal H}

\newcommand{\dB}{\partial \mathbb B_d}

\newcommand{\C}{\mathbb{C}}
\newcommand{\N}{\mathbb{N}}
\newcommand{\R}{\mathbb{R}}

\newcommand{\Bd}{\mathbb{B}_d}

\newcommand{\la}{\langle}
\newcommand{\ra}{\rangle}
\newcommand{\Hol}{\text{Hol}}

\theoremstyle{plain}
\newtheorem{theorem}{Theorem}[section]
\newtheorem{lemma}[theorem]{Lemma}

\newtheorem{prop}[theorem]{Proposition}

\theoremstyle{definition}

\begin{document}

\date{\today}

\bibliographystyle{plain}

\title[Multipliers]{A remark on the multipliers on spaces of Weak Products of functions}
\author{Stefan Richter}
\author{Brett D. Wick}
\address{Stefan Richter, Department of Mathematics \\
       University of Tennessee \\
       Knoxville, TN  37996}
\address{Brett D. Wick, Department of Mathematics \\
       Washington University -- St. Louis \\
       St. Louis, MO 63130}

 \thanks{Work of BDW was supported by the National Science Foundation, grants DMS-1560955 and DMS-0955432.}
\keywords{Dirichlet space, Drury-Arveson space, weak product, multiplier}
\maketitle
\begin{abstract} If $\mathcal{H}$ denotes a Hilbert space of analytic functions on a region $\Omega \subseteq \mathbb{C}^d$, then the weak product is defined by
$$\mathcal{H}\odot\mathcal{H}=\left\{h=\sum_{n=1}^\infty f_n g_n : \sum_{n=1}^\infty \|f_n\|_{\mathcal{H}}\|g_n\|_{\mathcal{H}} <\infty\right\}.$$  We prove that if $\mathcal{H}$ is a first order holomorphic Besov Hilbert space on the unit ball of $\mathbb{C}^d$, then the multiplier algebras of $\mathcal{H}$ and of $\mathcal{H}\odot\mathcal{H}$ coincide.
\end{abstract}

\section{Introduction}

Let $d$ be a positive integer and let $R= \sum_{i=1}^dz_i\frac{\partial}{\partial z_i}$ denote the radial derivative operator. For $s\in \R$ the holomorphic Besov space $B_s$ is defined to be the space of holomorphic functions $f$ on the unit ball $\Bd$ of $\C^d$ such that for some nonnegative integer $k>s$
$$\|f\|^2_{k,s}=\int_{\Bd}|(I+R)^k f(z)|^2 (1-|z|^2)^{2(k-s)-1} dV(z) <\infty.$$ Here $dV$ denotes Lebesgue measure on $\Bd$.
It is well-known that for any $f \in \Hol(\Bd)$ and any $s\in \R$ the quantity $\|f\|_{k,s}$ is finite for some nonnegative integer $k>s$ if and only if it is finite for all nonnegative integers $k>s$, and that for each $k>s$ $\|\cdot\|_{k,s}$ defines a norm on $B_s$, and that all these norms are equivalent to one another, see \cite{BeatrousBurbea}. For $s<0$ one can take  $k=0$ and these spaces are weighted Bergman spaces. In particular, $B_{-1/2}=L^2_a(\Bd)$ is the unweighted Bergman space. For $s=0$ one obtains the Hardy space of $\Bd$ and one has that for each $k\ge 1$
$\|f\|^2_{k,0}$
is equivalent to $\int_{\dB}|f|^2d\sigma$, where $\sigma$ is the rotationally invariant probability measure on $\dB$. We also note that for $s= {(d-1)}/2$ we have $B_s=H^2_d$, the Drury-Arveson space. If $d=1$ and $s=1/2$, then $B_s=D$, the classical Dirichlet space of the unit disc.

Let $\HH \subseteq \text{Hol}(\Bd)$ be a
reproducing kernel Hilbert space such that $1\in \HH$. The weak product of $\HH$ is
denoted by $\HH \odot \HH$ and it is defined to be the collection of
all functions $h \in \text{Hol}(\Bd)$ such that there are
sequences $ \{f_i\}_{i\ge 1}, \{g_i\}_{i\ge 1} \subseteq \HH $ with
$\sum_{i= 1}^\infty \|f_i\|_{\HH}\|g_i\|_{\HH}< \infty$ and for all $z \in \Bd$, $h(z) =\sum_{i=
1}^\infty f_i(z)g_i(z)$.

We define a norm on $\HH \odot \HH$ by
$$\|h\|_* =\inf\left\{\sum_{i=1}^\infty \|f_i\|_{\HH}\|g_i\|_{\HH}: h(z)= \sum_{i=1}^\infty
f_i(z)g_i(z) \text{ for all } z \in \Bd\right\}.$$
In what appears below we will frequently take $\HH=B_s$, and will use the same notation for this weak product.

Weak products have their origin in the work of Coifman, Rochberg, and Weiss \cite{CRW}. In the frame work of the Hilbert space $\HH$ one may consider  the weak product  to be an analogue of the Hardy $H^1$-space. For example, one has $H^2(\dB)\odot H^2(\dB)=H^1(\dB)$ and $L^2_a(\Bd)\odot L^2_a(\Bd)=L^1_a(\Bd)$, see \cite{CRW}.
For the Dirichlet space $D$  the weak product $D\odot D$ has recently been considered in \cite{ARSW}, \cite{CasOrt}, \cite{RiSu}, \cite{Luo}, and \cite{LuoRi}. The space $H^2_d\odot H^2_d$ was used in \cite{RiSunkes}. For further motivation and general background on weak products we refer the reader to \cite{ARSW} and \cite{RiSu}.

Let $\HB$ be a Banach space of analytic functions on $\Bd$ such that point evaluations are continuous and such that $1\in \HB$. We use  $M(\HB)$ to denote the multiplier algebra of $\HB$, $$M(\HB)=\left\{\varphi: \varphi f \in \HB \text{ for all }f\in \HB\right\}.$$ The multiplier norm $\|\varphi\|_M$ is defined to be the norm of the associated multiplication operator $M_\varphi: \HB\to \HB$.
It is easy to check and is well-known that  $M(\HB) \subseteq H^\infty(\Bd)$, and that for $s\le 0$ we have $M(B_s)=H^\infty(\Bd)$. For $s>d/2$ the space $B_s$ is an algebra \cite{BeatrousBurbea}, hence $B_s= M(B_s)$, but for $0<s \le d/2$ one has $M(B_s) \subsetneq B_s\cap H^\infty(\dB).$ For those cases $M(B_s)$ has been described by a certain Carleson measure condition, see \cite{OrtFab,CasFabOrt}.

It is easy to see that $M(\HH)\subseteq M(\Hprod)\subseteq H^\infty$ (see Proposition \ref{MultIncl}).
Thus, if $s\le 0$, then  $M(B_s)=M(B_s\odot B_s)=H^\infty$. Furthermore, if $s>d/2$, then $B_s=B_s\odot B_s= M(B_s)$ since $B_s$ is an algebra. This raises the question whether $M(B_s)$ and $M(B_s\odot B_s)$ always agree. We prove the following:
\begin{theorem}\label{MainTh} Let $s\in \R$ and $d\in \N$. If $s\le 1$ or $d\le 2$, then $M(B_s)=M(B_s\odot B_s)$.
\end{theorem}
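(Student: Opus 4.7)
By Proposition~\ref{MultIncl} one has $M(B_s) \subseteq M(B_s \odot B_s) \subseteq H^\infty(\Bd)$ for free, so only the reverse inclusion $M(B_s \odot B_s) \subseteq M(B_s)$ requires proof. The remarks preceding the theorem dispose of the trivial ranges: for $s \le 0$ both algebras equal $H^\infty(\Bd)$, and for $s > d/2$ the space $B_s$ is an algebra and $M(B_s) = B_s = B_s \odot B_s$. Thus I may restrict to $0 < s \le d/2$, and under either hypothesis of the theorem this reduces to $0 < s \le 1$ (when $d \le 2$ one has $s \le d/2 \le 1$ automatically). Let $k$ be the integer with $k > s$ of smallest admissible size, so $k \in \{1,2\}$ throughout.

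The strategy is to invoke the Carleson-measure description of $M(B_s)$ for $0 < s \le d/2$ from \cite{OrtFab,CasFabOrt}: a bounded holomorphic function $\varphi$ lies in $M(B_s)$ if and only if $d\mu_\varphi(z) := |R^k\varphi(z)|^2(1-|z|^2)^{2(k-s)-1}\,dV(z)$ is a $B_s$-Carleson measure, i.e.\
\begin{equation*}
\int_{\Bd} |f|^2\,d\mu_\varphi \le C\|f\|_{k,s}^2 \qquad \text{for all } f \in B_s.
\end{equation*}
Fix $\varphi \in M(B_s \odot B_s) \subseteq H^\infty(\Bd)$; the remaining task is to establish this Carleson testing inequality.

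The main step is to extract the testing inequality from the weak-product multiplier estimate $\|\varphi fg\|_* \le \|\varphi\|_{M(B_s\odot B_s)}\|f\|_{k,s}\|g\|_{k,s}$ for $f,g \in B_s$. I would proceed via a "mixed-norm" lemma asserting that, for $h \in B_s \odot B_s$ with $h \in H^\infty(\Bd)$, one has $\|h\|_{k,s}^2 \le C\|h\|_\infty\|h\|_*$, the direct $B_s$-analogue of the elementary Hardy identity $\|h\|_{H^2}^2 \le \|h\|_\infty\|h\|_{H^1}$ which exploits $H^2\odot H^2 = H^1$. Applied to $h = \varphi fg$ with $f,g \in B_s \cap H^\infty(\Bd)$, together with $\|\varphi fg\|_\infty \le \|\varphi\|_\infty\|f\|_\infty\|g\|_\infty$, this bounds $\|\varphi fg\|_{k,s}^2$ by a product in which one can then expand $R^k(\varphi fg)$ by Leibniz, specialize $g \equiv 1$ (so every term in the Leibniz expansion involving $R^j g$ with $j \ge 1$ vanishes), and absorb the surviving "lower-order" terms $R^a\varphi \cdot R^b f$ with $a < k$ using $\varphi \in H^\infty(\Bd)$ together with standard weighted Carleson estimates for $B_s$. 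What remains is the leading contribution $\int_{\Bd}|R^k\varphi|^2|f|^2(1-|z|^2)^{2(k-s)-1}\,dV$, which is controlled by a quantity involving $\|f\|_{k,s}$ and $\|f\|_\infty$; a rescaling/polarization argument and the density of $B_s \cap H^\infty(\Bd)$ in $B_s$ then upgrade this to the desired inequality $\int |f|^2 d\mu_\varphi \le C\|f\|_{k,s}^2$ for all $f \in B_s$.

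The principal obstacle is the mixed-norm lemma $\|h\|_{k,s}^2 \le C\|h\|_\infty\|h\|_*$, which is really the substance of the theorem. I would prove it by applying $R^k$ to an arbitrary weak-product decomposition $h = \sum_n f_n g_n$ with $\sum_n\|f_n\|_{k,s}\|g_n\|_{k,s}$ close to $\|h\|_*$, using Leibniz to distribute the derivatives across each product, and invoking the pointwise bound $\|h\|_\infty$ to convert one factor of the resulting weighted Cauchy--Schwarz splittings into an $L^\infty$-norm. The restriction $s \le 1$ is essential here: it confines $k$ to $\{1,2\}$, so every Leibniz expansion of $R^k(f_n g_n)$ has only a handful of cross-terms, all of which can be controlled in the weighted $L^2$-norm by the product $\|f_n\|_{k,s}\|g_n\|_{k,s}$ after routine Sobolev-type manipulations. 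For $s > 1$ and $d \ge 3$ the analogous higher-order Leibniz expansions produce cross-terms whose weighted $L^2$-norms do not close against the weak-product norm, and the argument as outlined breaks down.
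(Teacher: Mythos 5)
Your reduction to $0<s\le 1$ and your identification of the target --- a Carleson testing inequality for $d\mu_\varphi=|R^k\varphi|^2(1-|z|^2)^{2(k-s)-1}dV$ --- agree with the paper, but the route you propose has two genuine gaps. First, the mixed-norm lemma $\|h\|_{k,s}^2\le C\|h\|_\infty\|h\|_*$, which you rightly call the substance of the argument, is not delivered by your sketch: after applying Leibniz to an arbitrary decomposition $h=\sum_n f_ng_n$ you want to ``convert one factor into an $L^\infty$-norm'' using $\|h\|_\infty$, but the sup-norm of $h$ gives no control over the individual summands --- in a cross term such as $(Rf_n)g_n$ there is no bound on $\|g_n\|_\infty$, and only the sum of the products is pointwise controlled, not its pieces. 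Second, even granting that lemma, your endgame produces an estimate of the shape $\int|f|^2\,d\mu_\varphi\le C\|f\|_{k,s}\|f\|_\infty$ (or with constants depending on $\|f\|_\infty$), and such a bound does not upgrade to $\int|f|^2\,d\mu_\varphi\le C\|f\|_{k,s}^2$ by density of $B_s\cap H^\infty(\Bd)$ in $B_s$: an inequality whose constant degrades with $\|f\|_\infty$ is not preserved under $B_s$-norm limits, so the ``rescaling/polarization plus density'' step does not close.

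The paper's proof avoids both issues with two devices absent from your outline. (a) It tests on $h=f^2$, which lies in $B_s\odot B_s$ with $\|f^2\|_*\le\|f\|_{B_s}^2$ for \emph{every} $f\in B_s$, so boundedness of the test function never enters. (b) It runs a bootstrap rather than a direct estimate: from the identity $f^2R\varphi_r=R(\varphi_rf^2)-\varphi_rR(f^2)$ and the inequality $\int_{\Bd}|(Rh)(Rb)|\,dV_s\le 2\|h\|_*\|Rb\|_{Ca(B_s)}$ (a one-line consequence of Leibniz and Cauchy--Schwarz applied to any decomposition of $h$) one obtains
\begin{equation*}
\int_{\Bd}|f|^2|R\varphi_r|^2\,dV_s\le 4\,\|\varphi\|_{M(B_s\odot B_s)}\|f\|_{B_s}^2\,\|R\varphi_r\|_{Ca(B_s)},
\end{equation*}
so the unknown Carleson norm appears quadratically on the left and linearly on the right; since the dilate $\varphi_r$ is holomorphic across $\dB$, its Carleson norm is finite and can be divided out, giving $\|R\varphi_r\|_{Ca(B_s)}\le 4\|\varphi\|_{M(B_s\odot B_s)}$ uniformly in $r$, and Fatou's lemma finishes. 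No mixed-norm lemma and no Leibniz expansion beyond first order is needed; the case $s=1$ is handled by replacing $\|\cdot\|_{2,1}$ with the equivalent first-order boundary norm $\int_{\dB}|(I+R)f|^2\,d\sigma$. I recommend reorganizing your argument around the $f^2$ test functions and this self-improvement step.
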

Note that when $d\le 2$, then $B_s$ is an algebra for all $s>1$. Thus for each  $d\in \N$ the nontrivial range of the Theorem is $0 <s\le 1$. If $d=1$ then the theorem applies to the classical Dirichlet space of the unit disc and for $d\le 3$ it applies to the Drury-Arveson space.

\section{Preliminaries}
For $z=(z_1, ..., z_d)\in \C^d$ and $t \in \R$ we write $e^{it}z=(e^{it}z_1,...,e^{it}z_d)$ and we write $\la z,w\ra$ for the inner product in $\C^d$. Furthermore, if $h$ is a function on $\Bd$, then we define $T_t f$ by $(T_tf)(z)=f(e^{it}z)$. We say that a space $\HH\subseteq \Hol(\Bd)$ is radially symmetric, if each $T_t$ acts isometrically on $\HH$ and if for all $t_0 \in \R$, $T_t\to T_{t_0}$ in the strong operator topology as $t\to t_0$, i.e. if $\|T_tf\|_{\HH}=\|f\|_{\HH}$ and $\|T_tf-T_{t_0}f\|_{\HH}\to 0$ for all $f\in \HH$. For example, for each $s\in \R$ the holomorphic Besov space $B_s$ is radially symmetric when equipped with any of the norms $\|\cdot\|_{k,s}$, $k>s$.

It is elementary to verify the following lemma.
\begin{lemma} If $\HH \subseteq\Hol(\Bd)$ is radially symmetric, then so is $\Hprod$.
\end{lemma}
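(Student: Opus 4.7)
The plan is to verify the two defining properties of radial symmetry for $\Hprod$ equipped with the norm $\|\cdot\|_*$: that each $T_t$ is an isometry, and that $t\mapsto T_t$ is strongly continuous.

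For the isometry property, take any $h\in\Hprod$ and any admissible representation $h=\sum_{i\ge 1} f_ig_i$ with $\sum\|f_i\|_{\HH}\|g_i\|_{\HH}<\infty$. Since composition with the rotation commutes with pointwise multiplication, $T_t h=\sum_{i\ge 1}(T_tf_i)(T_tg_i)$ is an admissible representation of $T_th$, and isometry of $T_t$ on $\HH$ gives
\[
\sum_{i\ge 1}\|T_tf_i\|_{\HH}\|T_tg_i\|_{\HH}=\sum_{i\ge 1}\|f_i\|_{\HH}\|g_i\|_{\HH}.
\]
Taking the infimum over representations yields $\|T_t h\|_*\le\|h\|_*$, and applying the same inequality with $T_{-t}$ to $T_th$ gives equality.

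For the strong continuity, fix $h\in\Hprod$ and $t_0\in\R$, and given $\eps>0$ choose an admissible representation $h=\sum_{i\ge 1}f_ig_i$ with $\sum_{i\ge 1}\|f_i\|_{\HH}\|g_i\|_{\HH}<\|h\|_*+\eps$. A telescoping identity gives
\[
T_th-T_{t_0}h=\sum_{i\ge 1}(T_tf_i-T_{t_0}f_i)(T_tg_i)+\sum_{i\ge 1}(T_{t_0}f_i)(T_tg_i-T_{t_0}g_i),
\]
so by the definition of $\|\cdot\|_*$ and isometry of the $T_t$,
\[
\|T_th-T_{t_0}h\|_*\le \sum_{i\ge 1}\|T_tf_i-T_{t_0}f_i\|_{\HH}\|g_i\|_{\HH}+\sum_{i\ge 1}\|f_i\|_{\HH}\|T_tg_i-T_{t_0}g_i\|_{\HH}.
\]
Now pick $N$ so that $\sum_{i>N}\|f_i\|_{\HH}\|g_i\|_{\HH}<\eps$; the triangle inequality and isometry bound each tail term by $2\sum_{i>N}\|f_i\|_{\HH}\|g_i\|_{\HH}<2\eps$, uniformly in $t$. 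For the finite head $i\le N$, strong continuity of $T_t$ on $\HH$ applied to each $f_i$ and $g_i$ shows the head tends to $0$ as $t\to t_0$.

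The only mild subtlety is the uniform control of the infinite tail, handled by using a single admissible representation and the crude bound $\|T_tf-T_{t_0}f\|_{\HH}\le 2\|f\|_{\HH}$; once that is in place, letting $t\to t_0$ and then $\eps\to 0$ completes the argument.
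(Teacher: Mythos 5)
Your proof is correct and is exactly the direct verification the authors had in mind when they declared the lemma ``elementary to verify'' and omitted the proof: isometry follows by pushing $T_t$ through an arbitrary admissible representation, and strong continuity follows from the telescoping decomposition with a head/tail split, the tail controlled uniformly in $t$ by the crude bound $\|T_tf-T_{t_0}f\|_{\HH}\le 2\|f\|_{\HH}$. No gaps; the rearrangements are justified since $\HH$ is a reproducing kernel Hilbert space, so the representing series converge absolutely pointwise.
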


Note that if $h$ and $\varphi$ are functions on $\Bd$, then for every $t\in \R$ we have $(T_t\varphi)h= T_t(\varphi T_{-t}h)$, hence if a space is radially symmetric, then $T_t$ acts isometrically on the multiplier algebra. For $0<r<1$ we write $f_r(z)=f(rz)$.

\begin{lemma} If $\HH \subseteq \Hol(\Bd)$ is radially symmetric, and if $\varphi\in M(\Hprod)$, then for all $0<r<1$ we have $\|\varphi_r\|_{M(\Hprod)}\le \|\varphi\|_{M(\Hprod)}$.
\end{lemma}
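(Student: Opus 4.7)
The plan is to realize $\varphi_r$ as a positive convex average of the rotates $T_t\varphi$ via a Poisson kernel, and then use that rotation acts isometrically on $M(\Hprod)$. Concretely, the Poisson kernel identity $\tfrac{1}{2\pi}\int_0^{2\pi} P_r(t)\,e^{int}\,dt = r^{|n|}$, applied termwise to the Taylor expansion of a holomorphic function on $\Bd$, yields
$$f_r(z) = \frac{1}{2\pi}\int_0^{2\pi} P_r(t)\,(T_tf)(z)\,dt, \qquad z\in\Bd,$$
for every $f\in \Hol(\Bd)$. Applying this to $\varphi$ and multiplying pointwise by $h\in\Hprod$ gives
$$(\varphi_r h)(z) = \frac{1}{2\pi}\int_0^{2\pi} P_r(t)\,(T_t\varphi)(z)\,h(z)\,dt.$$

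The next step is to control the integrand in $\|\cdot\|_*$. Since $T_t$ is multiplicative on products of holomorphic functions and $T_{-t}T_t = I$, we have $(T_t\varphi)\,h = T_t(\varphi\,T_{-t}h)$. By the preceding lemma, $\Hprod$ is radially symmetric, and the remark after it guarantees that $T_t$ is isometric on $M(\Hprod)$, so
$$\|(T_t\varphi)h\|_* = \|\varphi\,T_{-t}h\|_* \le \|\varphi\|_{M(\Hprod)}\|T_{-t}h\|_* = \|\varphi\|_{M(\Hprod)}\|h\|_*.$$
Moreover, writing $(T_t\varphi)h - (T_{t_0}\varphi)h = T_t\bigl(\varphi(T_{-t}h - T_{-t_0}h)\bigr) + (T_t-T_{t_0})(\varphi\,T_{-t_0}h)$ and using the strong continuity of $\{T_t\}$ on $\Hprod$ together with $\varphi\in M(\Hprod)$ shows that the map $t\mapsto (T_t\varphi)h$ is norm-continuous from $\R$ into $(\Hprod,\|\cdot\|_*)$.

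Finally, approximate the integral by Riemann sums
$$S_N = \frac{1}{2\pi}\sum_k P_r(t_k)\,(T_{t_k}\varphi)\,h\cdot \Delta t_k.$$
By the continuity established above, $S_N$ converges in $\|\cdot\|_*$ to some element of $\Hprod$; since $\|\cdot\|_*$-convergence dominates pointwise convergence (via the reproducing kernel of $\HH$), the limit must equal $\varphi_r h$ pointwise, hence in $\Hprod$. The triangle inequality applied to $S_N$ gives
$$\|S_N\|_* \le \|\varphi\|_{M(\Hprod)}\|h\|_*\cdot \frac{1}{2\pi}\sum_k P_r(t_k)\,\Delta t_k,$$
and the right-hand side tends to $\|\varphi\|_{M(\Hprod)}\|h\|_*$ since $\tfrac{1}{2\pi}\int_0^{2\pi}P_r(t)\,dt = 1$. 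Passing to the limit yields $\|\varphi_r h\|_* \le \|\varphi\|_{M(\Hprod)}\|h\|_*$ for every $h\in\Hprod$, which is the claim. The one place where care is needed is the passage from the Riemann-sum bound to the integral bound; the key technical input there is the norm-continuity in Step 2, for which radial symmetry of $\Hprod$ (and, through the isometry remark, of $M(\Hprod)$) is essential.
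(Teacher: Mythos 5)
Your proposal is correct and follows essentially the same route as the paper: both realize $\varphi_r h$ as a Poisson average of $(T_t\varphi)h$, use the identity $(T_t\varphi)h = T_t(\varphi\,T_{-t}h)$ together with the isometry of $T_t$ on $\Hprod$ to bound each integrand by $\|\varphi\|_{M(\Hprod)}\|h\|_*$, and then integrate. You merely spell out the Riemann-sum justification of the vector-valued integral that the paper leaves implicit.
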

\begin{proof}  Let $\varphi\in M(\Hprod)$ and $h\in \Hprod$, then for $0<r<1$ we have $$\varphi_rh= \int_{-\pi}^{\pi} \frac{1-r^2}{|1-re^{it}|^2} (T_t\varphi)h \frac{dt}{2\pi}.$$ This implies $$\|\varphi_r h\|_*\le \int_{-\pi}^{\pi} \frac{1-r^2}{|1-re^{it}|^2} \|(T_t\varphi)h\|_* \frac{dt}{2\pi}\le \|\varphi\|_{M(\Hprod)} \|h\|_*.$$ Thus, $\|\varphi_r\|_{M(\Hprod)}\le \|\varphi\|_{M(\Hprod)}$.
\end{proof}

\section{Multipliers}

 The following Proposition is elementary.
\begin{prop} \label{MultIncl} We have $M(\HH)\subseteq M(\Hprod)\subseteq H^\infty$ and if
 $\varphi \in M(\HH)$,  $\|\varphi\|_{M(\Hprod)} \le \|\varphi\|_{M(\HH)}$. \end{prop}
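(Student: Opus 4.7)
The plan is to prove the two inclusions separately, with the first one being essentially a direct consequence of the definition of $\|\cdot\|_*$, and the second one requiring a standard spectral radius trick.

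For the first inclusion, I would start with $\varphi\in M(\HH)$ and $h\in \Hprod$. Given any weak-factorization $h=\sum_{i=1}^\infty f_ig_i$ with $\sum_i\|f_i\|_\HH\|g_i\|_\HH<\infty$, I would use $\varphi h=\sum_{i=1}^\infty(\varphi f_i)g_i$ as a candidate weak-factorization of $\varphi h$. Since each $\varphi f_i\in\HH$ with $\|\varphi f_i\|_\HH\le \|\varphi\|_{M(\HH)}\|f_i\|_\HH$, this factorization shows $\varphi h\in\Hprod$ and
\[
\|\varphi h\|_*\le \sum_{i=1}^\infty\|\varphi f_i\|_\HH\|g_i\|_\HH\le \|\varphi\|_{M(\HH)}\sum_{i=1}^\infty\|f_i\|_\HH\|g_i\|_\HH.
\]
Taking the infimum over all factorizations of $h$ yields $\|\varphi h\|_*\le \|\varphi\|_{M(\HH)}\|h\|_*$, which gives both the inclusion $M(\HH)\subseteq M(\Hprod)$ and the norm estimate $\|\varphi\|_{M(\Hprod)}\le \|\varphi\|_{M(\HH)}$.

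For the second inclusion $M(\Hprod)\subseteq H^\infty$, the key preliminary observation is that point evaluation at any $z\in\Bd$ is a bounded linear functional on $\Hprod$. Indeed, letting $k_z$ denote the reproducing kernel of $\HH$ at $z$, the Cauchy--Schwarz inequality gives $|f_i(z)g_i(z)|\le \|k_z\|_\HH^2\|f_i\|_\HH\|g_i\|_\HH$ for every factorization, whence $|h(z)|\le \|k_z\|_\HH^2\|h\|_*$. Now for $\varphi\in M(\Hprod)$, the observation that $1=1\cdot 1\in\Hprod$ immediately yields $\varphi\in\Hprod\subseteq\Hol(\Bd)$; iterating the multiplier property gives $\varphi^n\in\Hprod$ with $\|\varphi^n\|_*\le \|\varphi\|_{M(\Hprod)}^n\|1\|_*$. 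Combining with the point-evaluation bound,
\[
|\varphi(z)|^n=|\varphi^n(z)|\le \|k_z\|_\HH^2\,\|1\|_*\,\|\varphi\|_{M(\Hprod)}^n.
\]
Taking $n$-th roots and letting $n\to\infty$ absorbs the prefactor and produces $|\varphi(z)|\le \|\varphi\|_{M(\Hprod)}$ for every $z\in\Bd$, so $\varphi\in H^\infty(\Bd)$.

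I do not expect a genuine obstacle here; the only subtlety is remembering to pass to powers in order to absorb the $\|k_z\|_\HH^2\|1\|_*$ prefactor in the pointwise bound, since a bare first-power estimate would only yield $|\varphi(z)|\le \|k_z\|_\HH^2\|1\|_*\,\|\varphi\|_{M(\Hprod)}$, which is not uniform in $z$. Everything else is a direct unfolding of definitions.
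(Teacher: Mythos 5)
Your proof is correct, and since the paper simply declares this proposition elementary and omits any argument, your write-up (pushing $\varphi$ through a weak factorization for the first inclusion, and the boundedness of point evaluations on $\Hprod$ combined with the $n$-th power trick for the $H^\infty$ bound) is exactly the standard argument the authors intend. No gaps.
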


As explained in the Introduction, the following will establish Theorem \ref{MainTh}.

\begin{theorem} Let $0<s\le 1$. Then $M(B_s)=M(B_s \odot B_s)$ and there is a $C_s >0$ such that $$\|\varphi\|_{M(B_s \odot B_s)} \le \|\varphi\|_{M(B_s)} \le C_s \|\varphi\|_{M(B_s \odot B_s)}$$ for all $\varphi \in M(B_s)$.
\end{theorem}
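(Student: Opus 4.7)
By Proposition \ref{MultIncl} the inclusion $M(B_s)\subseteq M(B_s\odot B_s)$ holds with $\|\varphi\|_{M(B_s\odot B_s)}\le \|\varphi\|_{M(B_s)}$, so only the reverse containment requires work, together with the corresponding norm comparison. My approach is to route the argument through a Carleson-measure characterization of the two multiplier algebras. For $0<s\le 1$ the classical fact (see \cite{OrtFab,CasFabOrt}) is that $\varphi\in M(B_s)$ iff $\varphi\in H^\infty$ and, for some/any integer $k>s$,
\[
d\mu_\varphi(z):=|(I+R)^k\varphi(z)|^2(1-|z|^2)^{2(k-s)-1}\,dV(z)
\]
is a $B_s$-Carleson measure, i.e.\ $\int_\Bd |f|^2\,d\mu_\varphi\le C\|f\|^2_{B_s}$ for all $f\in B_s$. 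The plan is thus to show that every $\varphi\in M(B_s\odot B_s)$ satisfies this Carleson condition with $C\lesssim \|\varphi\|_{M(B_s\odot B_s)}^2$.

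The first useful remark is that Carleson measures for $B_s$ and for $B_s\odot B_s$ coincide. Indeed, by Cauchy-Schwarz a $B_s$-Carleson measure $\mu$ satisfies
\[
\int |fg|\,d\mu \le \Bigl(\int |f|^2 d\mu\Bigr)^{1/2}\Bigl(\int |g|^2 d\mu\Bigr)^{1/2}\le C\|f\|_{B_s}\|g\|_{B_s},
\]
which by the definition of $\|\cdot\|_*$ is equivalent to $\int |h|\,d\mu\le C\|h\|_*$ for all $h\in B_s\odot B_s$. Conversely, testing the latter against $h=f^2$ (for which $\|f^2\|_*\le\|f\|_{B_s}^2$) recovers the $B_s$-Carleson condition. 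So it is enough to show the weak-product Carleson bound $\int |h|\,d\mu_\varphi\le C\|\varphi\|_M^2\|h\|_*$ for $h\in B_s\odot B_s$, whenever $\varphi\in M(B_s\odot B_s)$.

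To establish this, I would mirror the standard multiplier argument at the weak-product level. Lemma 2.3 and Proposition \ref{MultIncl} let one replace $\varphi$ by its dilates $\varphi_r\in A(\Bd)$, keeping the multiplier norm under control and $\|\varphi\|_\infty\le\|\varphi\|_M$. For smooth $\varphi$ one uses the Leibniz identity
\[
(I+R)(\varphi h)=\varphi(I+R)h+hR\varphi,
\]
rewritten as $hR\varphi=(I+R)(\varphi h)-\varphi(I+R)h$. Combined with the multiplier bound $\|\varphi h\|_*\le \|\varphi\|_M\|h\|_*$ and the $L^\infty$-bound on $\varphi$, this identity (applied in its higher-order $(I+R)^k$ form when $s=1$) is what produces a weak-product Carleson estimate for $\mu_\varphi$.

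\textbf{Main obstacle.} The technical heart is converting the weak-product multiplier inequality, stated in the infimum-norm $\|\cdot\|_*$, into an integral Carleson-type inequality. Since $\|\cdot\|_*$ is not given a priori by any single integral, one needs a Littlewood-Paley / tent-space expression for $\|h\|_*$ of the form of an $L^1$-type area integral of $(I+R)^k h$ with weight $(1-|z|^2)^{2(k-s)-1}$. Such a characterization is natural precisely in the range $0<s\le 1$ (this is where the product $f_n g_n$ of two $B_s$-functions produces the right tent-space regularity), and this is what pins the theorem to this range. Once this characterization is in place, the estimates above yield $\mu_\varphi$ is $B_s$-Carleson with constant $\lesssim \|\varphi\|_{M(B_s\odot B_s)}^2$, and the Carleson-measure description of $M(B_s)$ then gives $\varphi\in M(B_s)$ with $\|\varphi\|_{M(B_s)}\le C_s\|\varphi\|_{M(B_s\odot B_s)}$.
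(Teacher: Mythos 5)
Your framework---reduce the problem to the Carleson-measure description of $M(B_s)$, use the Leibniz identity $hR\varphi=(I+R)(\varphi h)-\varphi(I+R)h$, and regularize with the dilates $\varphi_r$---is the same as the paper's, but the step you flag as the ``main obstacle'' is precisely where your argument has a genuine gap, and the tool you propose to fill it is the wrong one. You do not need, and should not look for, a Littlewood--Paley / tent-space expression for $\|h\|_*$: no two-sided integral characterization of the weak product norm is available here, and resting the proof on one would make it depend on a much harder (essentially open) problem. What actually makes the argument work is a purely one-sided estimate that follows from the definition of $\|\cdot\|_*$ as an infimum: if $h=\sum_i f_ig_i$ and $b$ is holomorphic in a neighborhood of $\overline{\Bd}$, then the product rule $Rh=\sum_i (Rf_i)g_i+f_i(Rg_i)$, Cauchy--Schwarz, and the Carleson property of $|Rb|^2dV_s$ give
$$\int_{\Bd}|(Rh)(Rb)|\,dV_s\le 2\Bigl(\sum_i\|f_i\|_{B_s}\|g_i\|_{B_s}\Bigr)\|Rb\|_{Ca(B_s)},$$
and taking the infimum over decompositions yields $\int_{\Bd}|(Rh)(Rb)|\,dV_s\le 2\|h\|_*\|Rb\|_{Ca(B_s)}$. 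Note this bounds an integral of $Rh$, not of $h$; your preliminary remark that $B_s$-Carleson and weak-product-Carleson measures coincide is correct but beside the point, since the whole difficulty is to show that $|R\varphi|^2\,dV_s$ is Carleson in the first place.

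Two further ingredients are missing from your sketch. First, the right test element is $h=f^2$ with $\|f^2\|_*\le\|f\|_{B_s}^2$, so that after inserting $f^2R\varphi_r=R(\varphi_rf^2)-\varphi_rR(f^2)$ the left-hand side becomes literally the Carleson quantity $\int_{\Bd}|f|^2|R\varphi_r|^2\,dV_s$. Second, the resulting inequality is self-referential: combining the two displays gives $\|R\varphi_r\|_{Ca(B_s)}^2\le 4\|\varphi\|_{M(\Hprod)}\|R\varphi_r\|_{Ca(B_s)}$, with the unknown on both sides. The real purpose of passing to $\varphi_r$ is not merely smoothness for the Leibniz rule but the a priori finiteness $\|R\varphi_r\|_{Ca(B_s)}<\infty$, which lets you divide through and obtain $\|R\varphi_r\|_{Ca(B_s)}\le 4\|\varphi\|_{M(\Hprod)}$ uniformly in $r$, after which Fatou's lemma as $r\to1$ finishes the case $0<s<1$ (and the case $s=1$ is identical with the boundary norm). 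Without the one-sided $L^1$ estimate, the choice $h=f^2$, and this absorption step, your plan does not close.
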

Here for each $s$ we have the norm on $B_s$ to be $\|\cdot\|_{k,s}$, where $k$ is the smallest natural number $>s$.
\begin{proof} We first do the case $0<s<1$.  Then  $k=1$, and $\|f\|^2_{B_s} = \int_{\Bd}|(I+R)f(z)|^2 dV_s(z)$, where $dV_s(z)=(1-|z|^2)^{1-2s}dV(z)$. For later reference we note that a short calculation shows that $\int_{\Bd}|Rf|^2 dV_s \le \|f\|^2_{B_s}$.

 We write $\|R\varphi\|_{Ca(B_s)}$ for the Carleson measure norm of $|R\varphi|^2$, i.e. $$\|R\varphi\|^2_{Ca(B_s)}=\inf\left\{C>0: \int_{\Bd}|f|^2 |R\varphi|^2 dV_s \le C\|f\|_{B_s}^2  \text{ for all }f\in B_s\right\}.$$

Since $\|\varphi f\|^2_{B_s}= \int_{\Bd} |\varphi(z) (I+R)f(z) +f(z)R\varphi(z)|^2 dV_s(z)$ it is clear that $\|\varphi\|_{M(B_s)}$ is equivalent to $\|\varphi\|_\infty+\|R\varphi\|_{Ca(B_s)}$. Thus, it suffices to show that there is a $c>0$ such that $\|R\varphi\|_{Ca(B_s)} \le c\|\varphi\|_{M(B_s \odot B_s)}$ for all $\varphi \in M(B_s\odot B_s)$.

First we note that if $b$ is holomorphic in a neighborhood of $\overline{\Bd}$ and
 $h= \sum_{i=1}^\infty f_ig_i \in B_s \odot B_s$, then
\begin{align*} \int_{\Bd}|(Rh)Rb|dV_s &\le \sum_{i=1}^\infty \int_{\Bd} |(Rf_i)g_i Rb|dV_s+ \int_{\Bd} |(Rg_i)f_i Rb|dV_s\\
&\le  \sum_{i=1}^\infty \|f_i\|_{B_s} \left(\int_{\Bd} |g_i Rb|^2dV_s \right)^{1/2} +\|g_i\|_{B_s} \left(\int_{\Bd} |f_i Rb|^2dV_s \right)^{1/2}\\
&\le  2\sum_{i=1}^\infty \|f_i\|_{B_s}\|g_i\|_{B_s} \|Rb\|_{Ca(B_s)}. \end{align*}
Hence
$$ \int_{\Bd}|(Rh)Rb|dV_s \le 2\|h\|_* \|Rb\|_{Ca(B_s)},$$  where we have continued to write $\|\cdot\|_*$ for $\|\cdot\|_{B_s\odot B_s}$.

Let $\varphi \in M(B_s \odot B_s)$ and let $0<r<1$. Then for all $f \in B_s$ we have $f^2, \varphi_rf^2 \in B_s \odot B_s$, hence
\begin{align*} \int_{\Bd}|f|^2 |R\varphi_r|^2dV_s &= \int_{\Bd}|R(\varphi_rf^2)-\varphi_rR(f^2)|\ |R\varphi_r|dV_s \\
&\le 2 (\|\varphi_rf^2\|_* +\|\varphi\|_\infty \|f^2\|_*)\|R\varphi_r\|_{Ca(B_s)}\\
&\le 2 (\|\varphi\|_{M(B_s \odot B_s)} \|f^2\|_* +\|\varphi\|_\infty \|f^2\|_*)\|R\varphi_r\|_{Ca(B_s)}\\
&\le 4 \|\varphi\|_{M(B_s \odot B_s)} \|f\|^2_{B_s} \|R\varphi_r\|_{Ca(B_s)}.
\end{align*}
Next we take the sup of the left hand side of this expression over all $f$ with $\|f\|_{B_s}=1$ and we obtain $\|R\varphi_r\|^2_{Ca(B_s)} \le 4 \|\varphi\|_{M(B_s \odot B_s)}  \|R\varphi_r\|_{Ca(B_s)}$ which implies that $\|R\varphi_r\|_{Ca(B_s)} \le 4 \|\varphi\|_{M(B_s \odot B_s)}$ holds for all $0<r<1$. Thus, for $0<s< 1$ the result follows from Fatou's lemma as $r\to 1$.

If $s=1$, then $\|f\|^2_{2,1} \sim \int_{\dB}|(I+R)f(z)|^2 d\sigma(z)$ and the argument proceeds as above.
\end{proof}

\include{biblio}
\bibliography{MultWeakProd}

\end{document}